\documentclass[a4paper,11pt,reqno]{amsart}
\usepackage[utf8]{inputenc}
\hoffset=-0.6in
\voffset=-0.6in
\textwidth=6in
\textheight=9in
\usepackage{amsmath}
\usepackage{amsthm}
\usepackage{amssymb}
\usepackage{amsfonts,mathrsfs}
\usepackage{stmaryrd}
\usepackage{amsxtra}  
\usepackage{epsfig}
\usepackage{verbatim}
\usepackage{enumerate}
\usepackage{xcolor}
\usepackage{enumitem}
\usepackage{bigints}
\usepackage{mathrsfs}
\usepackage{tikz-cd}
\usepackage{bm,bbm}
\usepackage{hyperref}
\usepackage[all]{xy}
\usepackage{mathtools, hyperref}
\usepackage{tikz}
\usepackage{tikz-cd}
\usetikzlibrary{shapes}
\usetikzlibrary{plotmarks}

\theoremstyle{plain}
\newtheorem{theorem}[equation]{Theorem}
\newtheorem{proposition}[equation]{Proposition}

\theoremstyle{definition}
\newtheorem{definition}[equation]{Definition}

\theoremstyle{remark}

\theoremstyle{remark}

\numberwithin{equation}{section}

\newcommand{\tmop}[1]{\ensuremath{\operatorname{#1}}}
\renewcommand{\Re}{\tmop{Re}}

\newcommand{\abs}[1]{\left\vert#1\right\vert}

\newcommand{\ol}{\overline}

\newcommand{\ipr}[1]{\left\langle #1 \right\rangle}

\newcommand{\xdownarrow}[1]{%
  {\left\downarrow\vbox to #1{}\right.\kern-\nulldelimiterspace}
}



\newcommand{\cx}{{\mathbb C}}

\newcommand{\rl}{\mathbb R}

\newcommand{\miso}{\bm{\mu}}

\newcommand{\B}{{\mathbb B}}

\newcommand{\g}{\mathfrak{g}}

\title{A Hermitian metric on hyperbolic complex manifolds}
\author{Debraj Chakrabarti}
\address{Department of Mathematics,
	Central Michigan University,
	Mt. Pleasant, MI 48859,
	USA}
\email{chakr2d@cmich.edu}
\author{Prachi Mahajan}
\address{Department of Mathematics,
        Indian Institute of Technology  Bombay, 
        Powai,
        Mumbai 400 076,
        India.}   
\email{prachi.mjn@iitb.ac.in}

\thanks{The first-named author was partially supported by a  US National Science
	Foundation grant number DMS-2153907, and by a grant from the Simons 
	Foundation (number 706445).}

\keywords{}
\subjclass[2020]{32F45}

\begin{document}
\begin{abstract} We describe a method of defining a Hermitian metric on Kobayashi hyperbolic manifolds. The metric is distance decreasing
under holomorphic mappings, up to a multiplicative constant. This method is distinct from the classical construction of Wu, and yields a metric which is expected 
to have superior regularity properties.
\end{abstract}
\maketitle

\section{Introduction}
Invariant distances and metrics (i.e., the infinitesimal forms of these distances on tangent vectors) play an important role in understanding the behavior of holomorphic maps of higher dimensional
complex manifolds. The most important examples include the Kobayashi-Royden, Carath\'{e}odory-Reiffen and Bergman metrics and the corresponding distance functions, though there are many more.
While all these are invariant in the sense that 
they are preserved under biholomorphisms, the Kobayashi and Carath\'{e}odory metrics and distances are distinguished by the remarkable property 
that they do not  increase under holomorphic maps. From 
the well-known  geometric interpretation of the Schwarz Lemma due 
to Ahlfors, such a ``distance-decreasing" property is characteristic 
of holomorphic maps of Hermitian manifolds of strongly negative holomorphic curvature, i.e. of Hermitian manifolds in which the holomorphic sectional curvature is bounded above by a negative constant. For general information 
on invariant metrics and their applications see, e.g, \cite{mahabharata,abate}.

The Carath\'{e}odory and Kobayashi metrics are  Finsler metrics which are Hermitian only in very special manifolds.
Kobayashi therefore raised a natural question: whether each Kobayashi hyperbolic manifold (i.e. for which the Kobayashi pseudodistance is a distance)  admits a Hermitian metric of 
strongly negative holomorphic curvature (see \cite{kobayashi1}). Partly  in an attempt to attack 
this problem, Wu (see \cite{wu1993}) introduced 
an invariant Hermitian metric  on Kobayashi hyperbolic manifolds $M$, which is \emph{almost distance-decreasing}, where  for an invariant
metric $M\mapsto F_M$ on a class of complex manifolds, to say
$F$ is almost distance-decreasing means
that for each positive integer $m$ there is a $C_m>0$ such that
$f^* F_N \leq C_m F_M$, whenever $f:M\to N$ is
a holomorphic map and $\dim_\cx(M)=m$ (see \cite{kim,mahabharata} for more information on the Wu metric.). For the Wu metric, $C_m = \sqrt{m}$ and  for the classical distance-decreasing metrics such as the Kobayashi-Royden and Carath\'{e}odory-Reiffen metrics, we have $C_m=1$ for each $m$.

It is conjectured that the Wu metric may provide a positive
answer to Kobayashi's question, i.e. the Wu metric is a Hermitian metric 
of strongly negative holomorphic curvature on Kobayashi hyperbolic manifolds.
Unfortunately, except in very special cases (see \cite{cheungkim1,cheunkim2, cheungkimconstant, balaprachiwu1,balaprachiwu2}),  it has proved difficult to understand the holomorphic curvature of the Wu metric, and Kobayashi's question remains 
unanswered. On the other hand, Wu's idea of replacing the Kobayashi metric with an almost distance-decreasing  Hermitian metric
constructed out of the Kobayashi metric seems like the 
correct approach to Kobayashi's question. In view of this, one may
ask for alternatives to Wu's construction which also lead to 
almost distance-decreasing Hermitian metrics on Kobayashi hyperbolic manifolds.

The aim of this note is to point out that a recent construction 
in Finsler geometry provides an alternative to the Wu metric, with very similar formal properties (but admittedly equally mysterious curvature behavior).
This construction, introduced in \cite{binet}, yields a Hermitian metric 
$\kappa_M$ on a Kobayashi hyperbolic complex manifold $ M $. It has interesting applications to Finsler geometry, and will be discussed in detail in Section~\ref{sec-binet} below. The following properties
of the new metric follow from the construction:

\begin{theorem}
    \label{thm-main}
    On every Kobayashi hyperbolic complex manifold $M$, there is 
    defined an invariant Hermitian metric $\kappa_M$ which has the following properties:
    \begin{enumerate}
     
        \item On the unit ball $\B_n$ of $\cx^n$, the metric $\kappa_{\B_n}$ coincides with ${ \frac{1}{n+1} B}$
        where $B$ is the Poincar\'{e}-Bergman metric.

\item 
If the Kobayashi-Busemann metric of $M$ is of class $\mathcal{C}^k$ for some $k$, then the metric $\kappa_M$ is also of class $\mathcal{C}^k$.
        
        \item If $M$ is Kobayashi complete, then $\kappa_M$ is a complete continuous
        Hermitian metric.
        \item If $f:M\to N$ is a holomorphic map of Kobayashi hyperbolic manifolds, then 
                \[ f^*\kappa_N \leq n^{n+1}m^{m+1} \kappa_M, \]
where $m=\dim_\cx M$ and $n=\dim_\cx N.$ If the Kobayashi metric on $ M$ is already Hermitian, we in fact have
    \[f^*\kappa_N\leq n^{n+1}\kappa_M. \]
 
    \end{enumerate}
\end{theorem}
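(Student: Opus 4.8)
The plan is to reduce each of the four assertions to a pointwise statement about the fiberwise assignment $F_x\mapsto\kappa_{M,x}$ on a single tangent space $T_xM$, where $F_x$ is the Kobayashi--Busemann norm (the convexification of the Kobayashi--Royden indicatrix), and then to exploit the averaging construction of Section~\ref{sec-binet}. The structural fact I would isolate first is the \emph{reproduction property} of that construction: if the indicatrix of $F_x$ is an ellipsoid---equivalently, if $F_x$ is Hermitian---then the averaging returns exactly the associated Hermitian form, and the normalization in \cite{binet} is pinned down so that a round indicatrix produces precisely $\kappa_{M,x}=F_x^2$. Property~(1) is then immediate: the Kobayashi--Royden metric of $\B_n$ is Hermitian at every point (a constant multiple of the Bergman metric), so its indicatrices are ellipsoids and reproduction gives $\kappa_{\B_n}=F_{\B_n}^2$; comparing normalizations at the origin, where $B_0(v,v)=(n+1)\abs{v}^2$ while $F_{\B_n}(v)=\abs{v}$, identifies this with $\tfrac{1}{n+1}B$.

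For the regularity in~(2), I would write $\kappa_{M,x}$ as an integral of a fixed quadratic moment against the indicatrix of $F_x$; if the Kobayashi--Busemann metric is of class $\mathcal{C}^k$, then this indicatrix varies $\mathcal{C}^k$-smoothly and differentiation under the integral sign shows that the resulting moment matrix, hence $\kappa_M$, is again $\mathcal{C}^k$. Property~(3) I would deduce by feeding the two-sided comparison between $\kappa_M$ and the square of the Kobayashi metric (established while proving~(4)) into the hypothesis that the Kobayashi distance is complete: since the length element of $\kappa_M$ is pinched between constant multiples of the Kobayashi--Royden metric, $\kappa_M$ is a complete continuous Hermitian metric.

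The core is the distance-decreasing estimate~(4), whose input is the distance-decreasing property $f^{*}F_N\le F_M$. I would carry the estimate through two comparisons between the averaged metric and the Finsler square. On the target side, an upper comparison $\kappa_N\le n^{n+1}F_N^{2}$ combined with $f^{*}F_N\le F_M$ gives $f^{*}\kappa_N\le n^{n+1}F_M^{2}$; on the source side, a lower comparison $F_M^{2}\le m^{m+1}\kappa_M$ completes the chain $f^{*}\kappa_N\le n^{n+1}m^{m+1}\kappa_M$. To handle the mismatch between $m=\dim_\cx M$ and $n=\dim_\cx N$, and the possible degeneracy of $df$, I would phrase the passage from $f^{*}F_N\le F_M$ dually: the codifferential $(df)^{*}$ maps the dual indicatrix $B_N^{*}\subset T^{*}_{f(x)}N$ into the dual indicatrix $B_M^{*}\subset T^{*}_xM$, a containment valid for $df$ of arbitrary rank, and it is this containment that feeds the averaging. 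The promised improvement when $F_M$ is Hermitian is then automatic: by the reproduction property the lower comparison is an equality $F_M^{2}=\kappa_M$, so the factor $m^{m+1}$ collapses and only $n^{n+1}$ survives.

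The step I expect to be the main obstacle is extracting the two comparisons with the stated constants. The averaging is not monotone under inclusion of convex bodies, so the containment $(df)^{*}B_N^{*}\subseteq B_M^{*}$ does not by itself compare the two moment matrices; the lower comparison $F_M^{2}\le m^{m+1}\kappa_M$ amounts to bounding the supremum of a linear functional over the balanced body $B_M^{*}$ by a multiple of its $L^2$-average, and it is precisely this sup-versus-average ratio on a convex body that carries the dimension-dependent loss. I would establish it through a volume/moment estimate of John type on the indicatrix, arranged so that equality holds exactly for ellipsoids, thereby recovering both the general constant and its collapse in the Hermitian case. The remaining points---convexity and measurability of the Busemann indicatrix, and the bookkeeping of the normalizing constant---are routine once the reproduction property is available.
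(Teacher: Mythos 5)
Your treatment of parts (1), (2) and (4) follows essentially the same route as the paper: part (1) via the reproduction property (Proposition~\ref{Basic}(1)) together with the normalization of the Bergman metric of $\B_n$ at the origin; part (2) by inheriting the regularity argument of \cite{binet} (your ``differentiation under the integral sign'' needs the usual polar-coordinate reparametrization, since the domain of integration varies, but that is exactly how the cited real-case proof goes); and part (4) as the chain consisting of an upper comparison on $N$, the distance-decreasing property $f^*\hat{k}_N \leq \hat{k}_M$ of the Kobayashi--Busemann metric, and a lower comparison on $M$, where both comparisons with constants $n^{n+1}$ and $m^{m+1}$ are obtained from John's ellipsoid (this is Proposition~\ref{Basic}(5)), and the collapse of $m^{m+1}$ in the Hermitian case again comes from reproduction. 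One remark: your concern about the rank of $df$, and the dual-indicatrix reformulation it prompts, is unnecessary --- the estimate is a chain of scalar inequalities evaluated at $f'(p)v$ and is valid verbatim when $df$ is degenerate.

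The genuine gap is in part (3). You claim that $\sqrt{\kappa_M}$ is ``pinched between constant multiples of the Kobayashi--Royden metric,'' but the two-sided comparison you actually establish in (4) is with the Kobayashi--Busemann metric $\hat{k}_M$, and these two Finsler metrics are \emph{not} comparable by any dimensional constant: one always has $\hat{k}_M \leq k_M$, but the ratio $k_M/\hat{k}_M$ can be arbitrarily large even in fixed dimension. For instance, take the bounded balanced pseudoconvex domain $\Omega_\ve = \{(z_1,z_2)\in \Delta^2 : \abs{z_1z_2}<\ve\}$; the Kobayashi--Royden metric of a balanced pseudoconvex domain at the origin equals its Minkowski gauge, so $k_{\Omega_\ve}(0;(1,1)) = \ve^{-1/2}$, whereas the convex hull of $\Omega_\ve$ contains the segment from $(r,0)$ to $(0,r)$ for every $r<1$, whence $\hat{k}_{\Omega_\ve}(0;(1,1)) \leq 2$. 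Consequently your pinching yields only $d_{\kappa_M} \geq c\, \hat{d}_M$, where $\hat{d}_M$ is the integrated form of $\hat{k}_M$, while the hypothesis of (3) is completeness of the Kobayashi distance $d_M$, the integrated form of $k_M$. The missing ingredient --- and the one the paper invokes --- is Kobayashi's theorem that the integrated form of the Kobayashi--Busemann metric \emph{equals} $d_M$ (\cite{Kobayashi2}); it cannot be bypassed by a pointwise comparison. Note also that the lower bound $d_{\kappa_M} \geq c\, d_M$ is precisely the direction completeness requires (it makes a $\kappa_M$-Cauchy sequence $d_M$-Cauchy), and that the continuity assertion in (3) does not follow from any pinching: it uses the fact that on complete hyperbolic manifolds $k_M$, and hence $\hat{k}_M$, is continuous (cf.\ \cite{kobayashifat}), after which the regularity argument of part (2) applies.
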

 In fact, the construction of the metric $\kappa_M$ can be applied to arbitrary complex manifolds (not necessarily Kobayashi hyperbolic manifolds), but then we will obtain a pseudohermitian metric (i.e. not necessarily positive definite). For instance, if the Kobayashi metric vanishes identically on $ M $, then so does $ \kappa_M $. However, where the alternative construction 
 is expected to behave better is with respect to regularity properties, as one can see from the following example. Consider the geometrically convex domains 
\[
E_{2m}= \{ (z_1, z_2) \in \mathbb{C}^2: |z_1|^{2m} + |z_2|^2 < 1 \} 
\]  
for $ m > 1 $. Then the Kobayashi-Busemann metric (which coincides with the Kobayashi metric here) is of class $ C^2 $ on $ E_{2m} \times (\mathbb{C}^2 \setminus \{0\} ) $ (see Theorem 1.1 of \cite{Ma}). It follows from part (2) of  Theorem \ref{thm-main} above that $ \kappa_{E_{2m}} $ is also of class $ C^2 $.
However, the Wu metric on $ E_{2m} $ is $ C^1$-smooth but not $ C^2$-smooth as shown in Theorem 2 of \cite{cheungkim1}. Consequently,
we can talk meaningfully about the curvature and other differential properties of the new metric $\kappa_M$.
This suggests a program of deeper study of $\kappa_M$ in the context of the Kobayashi conjecture, but this will need
understanding of the smoothness of the Kobayashi and Kobayashi-Busemann metrics, which is available for some 
classes of domains. 
 For instance, if $ M $ is a bounded strongly convex domain with $ C^k (k \geq 6) $ boundary, then the Kobayshi-Busemann metric (which is the same as the Kobayashi metric) on $ M $ is $ C^{k-5}$-smooth (\cite{Lempert}) and hence the metric $ \kappa_M $ is also $ C^{k-5}$-smooth.

The Wu metric behaves nicely with respect to formation of products and with respect to covering maps. Similar properties hold for $\kappa_M$:

\begin{theorem}
    \label{thm-product}
    Let $M_1,\dots, M_N$ be Kobayashi hyperbolic manifolds, with 
    $\dim_\cx M_j=n_j.$ Let $M=M_1\times  \dots \times M_N$ be the product manifold and let $n= \sum_{j=1}^N n_j =\dim_\cx M.$
    Then we have a product representation:
    \[ \kappa_M = \frac{n_1+1}{n+1}\kappa_{M_1}\times \dots \times \frac{n_N+1}{n+1}\kappa_{M_{N}}. \]
\end{theorem}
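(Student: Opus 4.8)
The plan is to reduce the asserted identity to a pointwise statement in Hermitian linear algebra at an arbitrary point $p=(p_1,\dots,p_N)\in M$, where the tangent space decomposes as the direct sum $T_pM=\bigoplus_{j=1}^N T_{p_j}M_j$. Recall that $\kappa_M(p)$ depends only on the Kobayashi--Busemann indicatrix $I_p=\{v\in T_pM:\widehat F_M(p,v)<1\}$: one integrates the squared modulus $|\omega(\xi)|^2$ of a linear functional $\omega$ over $I_p$, normalized by $\operatorname{vol}(I_p)$ and by the dimensional constant $n+1$ (fixed so that the round indicatrix reproduces the Euclidean form, as in part (1) of Theorem~\ref{thm-main}), to obtain a Hermitian form $\kappa_M^\ast(p)$ on the cotangent space $T_p^\ast M$, and then passes to the dual Hermitian form $\kappa_M(p)$ on $T_pM$. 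Thus it suffices to (i) identify $I_p$ as a product of the factor indicatrices, and (ii) push the integral construction through that product.

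For (i), I would first invoke the classical product formula for the Kobayashi--Royden metric, $F_M(p,v)=\max_{j}F_{M_j}(p_j,v_j)$ for $v=(v_1,\dots,v_N)$, whose indicatrix is exactly the product $\prod_j I^R_{p_j}$ of the Royden indicatrices. Passing to the Busemann metric, whose indicatrix is the convex hull of the Royden indicatrix, I would use the elementary fact that the convex hull commutes with finite products, $\operatorname{conv}(A_1\times\cdots\times A_N)=\operatorname{conv}(A_1)\times\cdots\times\operatorname{conv}(A_N)$, to conclude that $I_p=\widehat I_{p_1}\times\cdots\times\widehat I_{p_N}$ is the product of the factor Busemann indicatrices. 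Kobayashi hyperbolicity guarantees each $\widehat I_{p_j}$ is a bounded, balanced, convex domain, so all volumes and moment integrals appearing below are finite and positive.

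For (ii), write a cotangent vector as $\omega=(\omega_1,\dots,\omega_N)$ in the dual blocks and evaluate the defining integral of $\kappa_M^\ast(p)$ over $I_p=\prod_j\widehat I_{p_j}$ by Fubini. Since each factor $\widehat I_{p_j}$ is circled, the first moments $\int_{\widehat I_{p_j}}\xi\,d\lambda$ vanish, so every cross-block term drops out and $\kappa_M^\ast(p)$ is block diagonal. On the $j$-th block the product of the remaining factor-volumes cancels against $\operatorname{vol}(I_p)=\prod_l\operatorname{vol}(\widehat I_{p_l})$, and comparison of the normalizing constants $n+1$ and $n_j+1$ leaves exactly $\frac{n+1}{n_j+1}$ times the cotangent form $\kappa_{M_j}^\ast$. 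Inverting this block-diagonal form to return to the tangent space replaces each factor $\frac{n+1}{n_j+1}$ by its reciprocal, giving $\kappa_M|_{\text{block }j}=\frac{n_j+1}{n+1}\kappa_{M_j}$, which is precisely the claimed product representation.

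I expect the only genuinely substantive step to be (i), and within it the passage from the Royden to the Busemann indicatrix: the product formula for the Royden metric is classical, but one must check that convexification neither creates nor destroys mixed directions, which is exactly what the commutation of $\operatorname{conv}$ with products secures. The computation in (ii) is then routine, the vanishing of the off-diagonal blocks being the single point at which the circular symmetry of the indicatrix is used, and the reciprocation of the block factors being a formal consequence of dualizing.
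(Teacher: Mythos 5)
Your proposal is correct and takes essentially the same approach as the paper: your step (ii) --- the Fubini computation with vanishing first moments, cancellation of factor volumes, and reciprocation of the block constants under dualization --- is precisely the paper's Proposition~\ref{prop-product}, and the reduction of the theorem to that pointwise statement is the paper's entire argument. The only difference is in step (i): the paper directly cites the Kobayashi--Busemann product formula (Proposition 3.5.25 of Kobayashi's book), whereas you derive it from the classical Royden product formula together with the valid identity $\operatorname{conv}(A_1\times\cdots\times A_N)=\operatorname{conv}(A_1)\times\cdots\times\operatorname{conv}(A_N)$, which amounts to proving the cited fact rather than quoting it.
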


\begin{theorem} \label{thm-covering}
Let $M,N$ be Kobayashi-hyperbolic manifolds and let $\pi:M\to N$
be a holomorphic covering mapping. Then $\pi^*\kappa_N=\kappa_M.$
    
\end{theorem}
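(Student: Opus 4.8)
The plan is to factor the assertion into two statements: first, that the Kobayashi--Busemann metric pulls back \emph{exactly} under the covering $\pi$, and second, that the passage from the Kobayashi--Busemann metric to $\kappa$ is a pointwise operation that is equivariant under complex-linear isomorphisms of tangent spaces. Recall from Section~\ref{sec-binet} that $\kappa_M$ is built pointwise: at $p\in M$ it is manufactured solely from the indicatrix of the Kobayashi--Busemann metric in $T_pM$, by a recipe (convex body in a complex vector space) $\longmapsto$ (Hermitian form) that intertwines $\C$-linear isomorphisms. This equivariance is exactly what gives the biholomorphic invariance of $\kappa_M$ asserted in Theorem~\ref{thm-main}; since a covering map is infinitesimally a biholomorphism, the same equivariance applies verbatim once the indicatrices are matched under $d\pi_p$.

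For the first statement, write $F_M, F_N$ for the Kobayashi--Royden metrics and $\hat F_M, \hat F_N$ for the Kobayashi--Busemann metrics. Holomorphy of $\pi$ and the distance-decreasing property give $\pi^*F_N \le F_M$. For the reverse inequality I would use the disc-lifting property of covering maps: given an analytic disc $\varphi\colon \D\to N$ with $\varphi(0)=\pi(p)$ and $\varphi'(0)=r\,d\pi_p(v)$, simple connectivity of $\D$ yields a lift $\wt\varphi\colon\D\to M$ with $\pi\circ\wt\varphi=\varphi$ and $\wt\varphi(0)=p$; since $d\pi_p$ is injective, $\wt\varphi'(0)=rv$, so $\wt\varphi$ is an admissible competitor for $F_M$ at $(p,v)$ with the same radius. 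Hence $F_M(p,v)\le F_N(\pi(p),d\pi_p v)$, and therefore $\pi^*F_N=F_M$. Because $d\pi_p\colon T_pM\to T_{\pi(p)}N$ is a linear isomorphism and formation of the convex hull of the Royden indicatrix (which produces the Busemann indicatrix) commutes with linear maps, it follows that $d\pi_p$ carries the Kobayashi--Busemann indicatrix of $M$ at $p$ onto that of $N$ at $\pi(p)$; equivalently $\pi^*\hat F_N=\hat F_M$.

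It remains to combine the two ingredients. Since $d\pi_p$ is a $\C$-linear isomorphism sending the Busemann indicatrix at $p$ to the one at $\pi(p)$, the equivariance of the construction gives $\kappa_M(p)(v,w)=\kappa_N(\pi(p))\big(d\pi_p v, d\pi_p w\big)$ for all $v,w\in T_pM$, which is precisely $\pi^*\kappa_N=\kappa_M$.

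The main obstacle is the equality---rather than mere inequality---of the Kobayashi metrics in the second paragraph: this is where the covering structure is genuinely used, through the lifting of analytic discs, and it is the only place where more than holomorphy of $\pi$ is invoked. Once this exact matching of indicatrices is established, the conclusion is formal, relying only on the pointwise, dimension-normalized, $\C$-linearly equivariant nature of the construction recorded in Section~\ref{sec-binet}---the same feature that underlies the biholomorphic invariance in Theorem~\ref{thm-main}.
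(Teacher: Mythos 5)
Your proof is correct and follows essentially the same route as the paper: both arguments factor the claim into the exact pull-back identity $\pi^*\hat{k}_N=\hat{k}_M$ for the Kobayashi--Busemann metric, followed by the $\C$-linear equivariance of the Binet--Legendre construction (Proposition~\ref{Basic}\,(2)) applied to the isomorphism $d\pi_p$. The only difference is that the paper simply cites Proposition~3.5.26 of \cite{kobayashifat} for the first ingredient, whereas you prove it directly---the distance-decreasing property in one direction, disc-lifting through the covering in the other, and then the observation that convex hulls commute with linear isomorphisms to pass from the Royden indicatrix to the Busemann indicatrix---which is a correct, self-contained rendering of the cited fact.
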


\section{From norms to inner products}
\subsection{The real Binet-Legendre construction}
Let $V$ be a vector space over $\rl$ or $\cx$, and let $F$ be a Banach-space norm 
on $V$. 
Recall that the \emph{open unit ball} of the norm $F$ is the set 
$\Omega_F=\{F<1\}\subset V$. The open unit ball is clearly convex and if $\xi \in \Omega_F$, then for each scalar $c$ such that $\abs{c}=1$, we have $c\xi \in \Omega_F$, which is expressed by saying that 
$\Omega_F$ is \emph{symmetric} when the scalars are real, and by saying that it is \emph{balanced} when the scalars are complex.
When the norm $ F $ is derived from a positive definite inner product $\ipr{,}$, i.e. $F(v)= \sqrt{\ipr{v,v}}$, the open unit ball of $F$ is an ellipsoid. 

Consider the (ill-defined) problem of associating with each norm $F$, an inner product that reflects in some way the geometry of $F$, or equivalently, associating an ellipsoid with the  
convex set $\Omega_F$. Many solutions to this problem have been 
suggested, such as using the ellipsoid of the smallest volume containing the set $\Omega_F$, or dually, the ellipsoid of the largest volume contained in $\Omega_F$ (see \cite{binet2}). 
In \cite{binet}, the following construction was used to associate 
an inner product $g_F$ with a norm $F$ on a $d$ dimensional real vector space $V$. Let $g_F^*:V^*\times V^*\to \rl$ be the inner product 
on the dual space $V^*=\hom_{\rl}(V,\rl)$ defined  for $\rl$-linear
functionals $\lambda, \mu : V\to \rl$ by
\[ g_F^*(\lambda, \mu)= \frac{d+2}{\abs{\Omega_F}}\int_{\Omega_F} \lambda(\eta)\mu(\eta)d\eta,\]
where the integral is taken with respect to a Haar measure on $V$ (thought of as a topological group under addition), and $\abs{\Omega_F}$ is the measure of $\Omega_F$ with respect to this measure. Notice that such a Haar measure may be obtained by choosing 
a basis of $V$ to identify it with $\rl^d$ and using the standard Lebesgue measure. Since any two Haar measures differ by a positive multiplicative constant, the average in the above definition is well-defined.

The (real) \emph{Binet-Legendre} inner product $g_F$ is defined to be the dual of $g_F^*$, i.e., for $v,w\in V$
 \[ g_F(v,w)= g_F^*(\check{v}, \check{w}),\]
where $\check{\cdot}$ is the musical isomorphism, i.e. for $v\in V$, $\check{v}$ is the unique element of $V^*$ such that 
\[ g_F^*(\check{v}, \theta)=\theta(v)\]
for each $\theta\in V^*.$  Motivation for considering this inner product arises from various
sources, including mechanics and probability, and it has been used in the solution of important problems in
Finsler geometry (see \cite{binet}).

\subsection{ The complex BL construction}

\label{sec-binet} From this point, all vector spaces are considered to complex. We now extend the construction of the Binet-Legendre inner product to complex vector spaces.  Let $V^*= \hom_\cx(V,\cx)$ be the dual  complex vector space of $V$ and let $n=\dim_\cx V$. We define
for $\theta, \phi\in V^*$:
\begin{equation}
    \label{eq-dualmetric}
    \g_F^*(\theta, \phi)= \frac{n+1}{\abs{\Omega_F}}\int_{\Omega_F} \theta(\eta)\ol{\phi(\eta)}d \eta,
\end{equation}
where, as in the real case, $d\eta$ denotes any Haar measure on $V$, thought of as an additive topological group and $\abs{\Omega_F}$ denotes the measure of the open set $\Omega_F$ in this
Haar measure. 
The function $\g_F^*:V^*\times V^*\to \cx$ in \eqref{eq-dualmetric}  is the \emph{dual complex Binet-Legendre inner product} corresponding to the pseudonorm $F$.

We now define
\[ \g_F:V\times V \to \cx\]
to be the Hermitian inner product dual to $\g^*_F$. We call $\g_F$ the \emph{ complex Binet-Legendre inner product associated with } the norm $F$.

More precisely, let $\miso^{-1}:V^*\to V$ be the 
musical isomorphism induced by $\g_F^*$, i.e. $\phi(\miso^{-1}(\theta))=\g_F^*(\theta,\phi)$ for $\phi,\theta\in V^*$.
Then $\g_F=\miso^* (g_F^*)$, i.e.
\begin{equation}
    \label{eq-BLdef}
    \g_F(v,w)= \g_F^*(\miso(v), \miso(w)).
\end{equation}
The following is easy to show by a direct computation, and establishes a relation between the real and complex 
versions of this construction. 
\begin{proposition} Let $V$ be an $n$-dimensional complex vector space and let $F$ be a norm on $V$. Let $J$ denote the complex
structure of the real vector space $V$,  $J\eta=i\eta$ for $\eta\in V$. Then we have, for $v,w\in V$
\begin{equation}
    \label{eq-realcomplex}
    \g_F(v,w)=g_F(v,w)-i g_F(Jv,w),
\end{equation}
where $\g_F$ and $g_F$ are the complex and real Binet-Legendre inner products associated wth $F$ respectively. In particular, 
\[ \Re \g_F(v,w)=g_F(v,w). \]
\end{proposition}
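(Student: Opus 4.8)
The plan is to reduce everything to the single identity $\Re\g_F=g_F$, from which the stated formula follows formally. By construction $\g_F$ is a Hermitian inner product, linear in its first slot and conjugate-linear in its second, since it is dual to $\g_F^*$, which has these properties because of the factor $\ol{\phi(\eta)}$ in \eqref{eq-dualmetric}. Writing $\g_F=g+i\omega$ with $g=\Re\g_F$ and $\omega=\Im\g_F$, the relation $\g_F(Jv,w)=\g_F(iv,w)=i\,\g_F(v,w)$ forces $\omega(v,w)=-g(Jv,w)$ upon comparing real parts. Hence once we know $\Re\g_F=g_F$, we immediately get $\g_F(v,w)=g_F(v,w)-i\,g_F(Jv,w)$, which is \eqref{eq-realcomplex}, and the second assertion is just the real part of this. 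So the entire content is the claim $\Re\g_F=g_F$.

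To prove this I would first pass to the dual side and establish
\[ \Re\g_F^*(\theta,\phi)=g_F^*(\Re\theta,\Re\phi)\qquad(\theta,\phi\in V^*), \]
where on the right $\Re\theta,\Re\phi$ are regarded as elements of the real dual $\hom_\rl(V,\rl)$ and $g_F^*$ is the real dual Binet--Legendre form. The computation is direct: any $\cx$-linear $\theta$ is recovered from $\lambda:=\Re\theta$ via $\theta(\eta)=\lambda(\eta)-i\lambda(J\eta)$, and likewise $\phi(\eta)=\mu(\eta)-i\mu(J\eta)$ with $\mu:=\Re\phi$. Expanding $\theta(\eta)\ol{\phi(\eta)}$ and integrating over $\Omega_F$, the crucial input is that $\Omega_F$ is balanced while the Haar (Lebesgue) measure is invariant under $J$, which is a product of planar rotations and so has real determinant $1$. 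The substitution $\eta\mapsto J\eta$ then gives $\int_{\Omega_F}\lambda(J\eta)\mu(J\eta)\,d\eta=\int_{\Omega_F}\lambda(\eta)\mu(\eta)\,d\eta$ and $\int_{\Omega_F}\lambda(J\eta)\mu(\eta)\,d\eta=-\int_{\Omega_F}\lambda(\eta)\mu(J\eta)\,d\eta$, which collapses the real part of the integrand to $2\int_{\Omega_F}\lambda\mu$. The prefactors then match exactly because the real dimension is $d=2n$, so $d+2=2(n+1)$, and this factor of $2$ is precisely what reconciles the complex weight $n+1$ with the real weight $d+2$.

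The remaining step transfers the dual identity to $V$ through the musical isomorphisms. I would show that the real-part map intertwines them, namely $\Re(\miso v)=\check v$ for every $v\in V$, where $\miso$ is the complex musical isomorphism and $\check{\cdot}$ the real one. This follows by inserting $\theta=\miso v$ into the dual identity: for arbitrary $\phi\in V^*$ one finds $g_F^*(\Re\miso v,\Re\phi)=\Re\big(\g_F^*(\miso v,\phi)\big)=\Re\big(\phi(v)\big)=(\Re\phi)(v)$, and as $\phi$ ranges over $V^*$ the functional $\Re\phi$ exhausts $\hom_\rl(V,\rl)$, so this is exactly the defining property of $\check v$. Combining the pieces,
\[ \Re\g_F(v,w)=\Re\g_F^*(\miso v,\miso w)=g_F^*(\Re\miso v,\Re\miso w)=g_F^*(\check v,\check w)=g_F(v,w), \]
as required.

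The main obstacle lies not in any single estimate but in keeping the two distinct duality structures straight: $V$ carries both a complex dual $V^*=\hom_\cx(V,\cx)$ and a real dual $\hom_\rl(V,\rl)$, and one must verify that the real-part map is compatible both with the defining integral formulas \eqref{eq-dualmetric} and with the two dualizations, i.e. that $\Re\circ\miso=\check{\cdot}$. Once this compatibility is secured and the constant $d+2=2(n+1)$ is matched, the conclusion is a short formal consequence.
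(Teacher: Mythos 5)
Your proof is correct, and it is essentially the ``direct computation'' that the paper merely asserts and does not carry out: you verify the dual-side identity $\Re\g_F^*(\theta,\phi)=g_F^*(\Re\theta,\Re\phi)$ using the $J$-invariance of $\Omega_F$ and of the Haar measure together with the constant matching $d+2=2(n+1)$, transfer it to $V$ via the compatibility $\Re\circ\miso=\check{\cdot}$ of the two musical isomorphisms, and recover \eqref{eq-realcomplex} formally from $\Re\g_F=g_F$ and sesquilinearity. This is exactly the intended argument, with the two genuinely non-formal points (the factor of $2$ reconciling the normalizations, and the intertwining of the real and complex dualities) correctly identified and handled.
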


\subsection{Basic properties}If $H$ is a Hermitian inner product on a complex vector space $V$, we can define a  norm
$F$ on $V$ by setting $F(v)=\sqrt{H(v,v)}.$ We then say that the norm $F$ is \emph{Hermitian} and write $F= \sqrt{H}$.
We have the following basic properties of the complex Binet-Legendre construction above. Most of the proofs are 
analogous to the real case considered  in \cite[Proposition~12.1]{binet}, so we suppress the details. However, 
we do note that the triangle inequality is not important in proving some of the properties of the Binet-Legendre construction. This may be of relevance in applying the construction to invariant Finsler metrics which do not necessarily satisfy the triangle inequality, such as the Kobayashi-Royden pseudometric. 

\begin{proposition} \label{Basic}
Let $F$ be a pseudonorm on an $n$-dimensional
complex vector space $V$, i.e., it satisfies all the properties of a norm, except perhaps the triangle inequality. 
Construct the Hermitian metric $\g_F$ exactly as in Section~\ref{sec-binet} above.
Then:
\begin{enumerate}[wide]
    \item If $ F $ comes from a Hermitian inner product $h$ on $V$, i.e., $ F = \sqrt{h} $, then  $\g_{F}=h.$
\item If $A\in GL(V)$ is a $\cx$-linear automorphism of 
$V$ then $\g_{A^*F}=A^*\g_F$, where $A^* F(v)=F(Av)$ and
$A^*\g_F(u,v)= \g_F(Au, Av).$ 
\item Let $\alpha, \beta >0$ and let $F_1, F_2$ be {pseudo}norms
on $V$ such that 
\begin{equation}
    \label{eq-comparablef1f2} \alpha F_1 \leq F_2 \leq \beta F_1.
\end{equation}
Then for each $v\in V$ we have
\begin{equation} \label{eq-comparablegf}
     \frac{\alpha^{n+1}}{\beta^{n}} \sqrt{\g_{F_1}(v,v)} \leq \sqrt{\g_{F_2}(v,v)}\leq  \frac{\beta^{n+1}}{\alpha^{n}}\sqrt{\g_{F_1}(v,v)}.
\end{equation}
\item One of the two inequalities in \eqref{eq-comparablegf} is a equality for some $v\not=0$
if and only if $F_2=\beta F_1$ for some $\beta>0$. Then we have  $\g_{F_2}= \beta^2 \g_{F_1}.$


\item If $ F $ satisfies the triangle inequality, then
\[\frac{1}{n^{\frac{n+1}{2}}} F \leq \sqrt{\g_F} \leq {n^{\frac{n+1}{2}}}{F}.  \]
\end{enumerate}
\end{proposition}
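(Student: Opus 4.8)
The plan is to prove each of the five assertions of Proposition~\ref{Basic}, relying on the comparison inequality~\eqref{eq-comparablegf} as the central tool, since parts (4) and (5) follow from it.

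\textbf{Parts (1) and (2).} For part (1), suppose $F=\sqrt{h}$. Then $\Omega_F$ is the $h$-unit ball (an ellipsoid), and one computes $\g_F^*$ directly: choosing an $h$-orthonormal basis identifies $\Omega_F$ with the Euclidean unit ball, and the integral $\int_{\Omega_F}\theta(\eta)\ol{\phi(\eta)}\,d\eta$ is evaluated using the symmetry of the ball. The off-diagonal terms vanish by parity (the ball is balanced, so $\eta\mapsto e^{i\alpha}\eta$ is measure-preserving and forces cross terms to cancel), and the diagonal normalization is fixed precisely so that the prefactor $\tfrac{n+1}{|\Omega_F|}$ cancels the moment $\int_{\Omega_F}|\eta_k|^2\,d\eta$; this yields $\g_F^*=h^*$ and hence $\g_F=h$ after dualizing. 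The constant $n+1$ in~\eqref{eq-dualmetric} is chosen exactly to make this normalization work, which is the content of the claim. Part (2) is naturality under linear change of variables: if $A\in GL(V)$, then $\Omega_{A^*F}=A^{-1}\Omega_F$, and the change of variables $\eta\mapsto A\eta$ in the defining integral (which scales $|\Omega_F|$ by $|\det A|$ in the same way it scales the integral) shows $\g_{A^*F}^*=(A^*)^*\g_F^*$, and dualizing gives $\g_{A^*F}=A^*\g_F$. These are the computations suppressed by the authors, parallel to the real case in \cite{binet}.

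\textbf{Part (3), the comparison inequality.} This is the main technical step and the principal obstacle. The hypothesis $\alpha F_1\le F_2\le\beta F_1$ translates into an inclusion of unit balls: $\Omega_{\beta F_1}\subseteq\Omega_{F_2}\subseteq\Omega_{\alpha F_1}$, i.e. $\tfrac1\beta\Omega_{F_1}\subseteq\Omega_{F_2}\subseteq\tfrac1\alpha\Omega_{F_1}$. The strategy is to bound $\g_{F_2}^*(\theta,\theta)$ in terms of $\g_{F_1}^*(\theta,\theta)$ by estimating both the integral $\int\theta(\eta)\ol{\theta(\eta)}\,d\eta$ and the volume $|\Omega_{F_2}|$ against their $F_1$-counterparts. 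Monotonicity of the integral of the nonnegative integrand $|\theta(\eta)|^2$ under the inclusion of domains, combined with the scaling $\int_{t\Omega}|\theta|^2 = t^{n+2}\int_\Omega|\theta|^2$ (degree $2$ in $\theta$ times degree $n$ from the real volume element in complex dimension $n$) and $|t\Omega|=t^{2n}|\Omega|$, produces bounds on $\g_{F_2}^*$ of the form $\alpha^{n+2}/\beta^{2n}\le \g_{F_2}^*/\g_{F_1}^*\le\beta^{n+2}/\alpha^{2n}$ at the level of the dual metric. The delicate point is that $\g_F$ is the \emph{dual} of $\g_F^*$, so passing from the dual estimate to~\eqref{eq-comparablegf} requires inverting the inequalities, which flips the roles of $\alpha$ and $\beta$ and yields the stated exponents $\alpha^{n+1}/\beta^n$ and $\beta^{n+1}/\alpha^n$ on the square roots. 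I would carry out the bookkeeping of exponents carefully here, since the inversion step under dualization is exactly where errors creep in.

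\textbf{Parts (4) and (5).} Part (4) is the equality analysis: tracing through part (3), equality in either bound of~\eqref{eq-comparablegf} forces equality in the monotonicity of $\int|\theta|^2$ over the nested domains for the relevant $\theta$, which (since $|\theta|^2>0$ off a measure-zero set) forces $\Omega_{F_2}=\beta\Omega_{F_1}$ up to the scaling, i.e. $F_2=\beta F_1$; the converse and the conclusion $\g_{F_2}=\beta^2\g_{F_1}$ then follow from part (2) applied to the scalar multiple. Part (5) is a direct corollary: when $F$ satisfies the triangle inequality, a John-type or elementary argument gives $\tfrac1{\sqrt n}\,\sqrt{h}\le F\le\sqrt n\,\sqrt{h}$ for the Hermitian norm $\sqrt{h}=\sqrt{\g_F}$ reconstructed from $\g_F$; feeding $\alpha=1/\sqrt n$, $\beta=\sqrt n$ (or the appropriate comparison with a Hermitian norm) into~\eqref{eq-comparablegf} and using part (1) yields the stated bound $n^{-(n+1)/2}F\le\sqrt{\g_F}\le n^{(n+1)/2}F$.
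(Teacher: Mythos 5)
Your strategy coincides with the paper's own proof: unit-ball inclusions plus scaling estimates at the level of the dual form $\g^*$, dualization (which inverts inequalities between positive definite forms), and John's theorem for part (5). However, the central computation in part (3) is carried out with wrong exponents, and the error is not cosmetic. Since $V$ has complex dimension $n$, its \emph{real} dimension is $2n$: under the dilation $\eta\mapsto t\eta$ a Haar measure scales by $t^{2n}$, so $\abs{t\Omega}=t^{2n}\abs{\Omega}$ (which you state correctly) but also
\[
\int_{t\Omega}\abs{\theta(\eta)}^2\,d\eta \;=\; t^{2n+2}\int_{\Omega}\abs{\theta(\eta)}^2\,d\eta,
\]
not $t^{n+2}$ as you claim. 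The correct two-sided bound at the dual level is
\[
\frac{\alpha^{2n}}{\beta^{2n+2}}\,\g_{F_1}^* \;\le\; \g_{F_2}^* \;\le\; \frac{\beta^{2n}}{\alpha^{2n+2}}\,\g_{F_1}^*,
\]
and dualizing flips these into $\frac{\alpha^{2n+2}}{\beta^{2n}}\g_{F_1}\le\g_{F_2}\le\frac{\beta^{2n+2}}{\alpha^{2n}}\g_{F_1}$, whose square roots are exactly \eqref{eq-comparablegf}. Your claimed intermediate bounds $\alpha^{n+2}/\beta^{2n}\le \g_{F_2}^*/\g_{F_1}^*\le\beta^{n+2}/\alpha^{2n}$ are incorrect (they do not even follow from your own scaling claim, which would give $\beta^{2n}/\alpha^{n+2}$ as the upper constant), and inverting them produces exponents like $\alpha^{n}/\beta^{(n+2)/2}$ after taking square roots, not the stated $\alpha^{n+1}/\beta^{n}$. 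So the assertion that the stated exponents ``come out'' after dualization is precisely the bookkeeping you acknowledge deferring, and as written it fails; since parts (4) and (5) are reductions to part (3), this is the gap that must be repaired.

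Part (5) has an independent problem: as written, you invoke John's theorem to compare $F$ with ``the Hermitian norm $\sqrt{h}=\sqrt{\g_F}$,'' which is circular --- the John ellipsoid of the unit ball $\Omega_F$ has nothing a priori to do with $\g_F$. The correct argument (the paper's) introduces a \emph{third} norm: let $J$ be the Hermitian ellipsoid of minimal volume containing $\Omega_F$ and $\mathfrak{h}$ the Hermitian inner product whose unit ball is $J$; John's theorem (which needs convexity of $\Omega_F$, i.e.\ the triangle inequality --- this is where that hypothesis enters) gives $\Omega_F\subset J\subset\sqrt{n}\,\Omega_F$, i.e.\ $\tfrac{1}{\sqrt{n}}F\le\sqrt{\mathfrak{h}}\le F$. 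One then applies part (3) to the pair $(F,\sqrt{\mathfrak{h}})$ with $\alpha=1/\sqrt{n}$, $\beta=1$, uses part (1) in the form $\g_{\sqrt{\mathfrak{h}}}=\mathfrak{h}$, and combines. Note also that the symmetric comparison $\tfrac{1}{\sqrt{n}}\sqrt{\mathfrak{h}}\le F\le\sqrt{n}\,\sqrt{\mathfrak{h}}$ you offer as an alternative is too lossy: fed into \eqref{eq-comparablegf} it yields only $\sqrt{\g_F}\le n^{n+1}F$, so the one-sided John inclusion (with $\beta=1$) is genuinely needed to reach the constant $n^{(n+1)/2}$. Your sketches of (1), (2) and (4) are consistent with the paper, with the caveat in (4) that equality of the ratio bound must force equality in \emph{both} the integral comparison and the volume comparison; it is the combination of the two (giving $\Omega_2=\tfrac1\alpha\Omega_1$ and $\abs{\Omega_2}=\tfrac{1}{\beta^{2n}}\abs{\Omega_1}$) that yields $\alpha=\beta$ and hence $F_2=\beta F_1$.
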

\begin{proof}
  Parts (1) and (2) are shown exactly as for the real version in \cite[Proposition~12.1]{binet}. 
     \begin{enumerate}[wide]   \setcounter{enumi}{2}
    \item Denote by $\Omega_1, \Omega_2$ the unit balls of the norms $F_1$ and $F_2$ respectively. They  are therefore related by 
    \[\frac{1}{\beta} \Omega_1 \subset \Omega_2 \subset \frac{1}{\alpha}\Omega_1\]
    and therefore
    \[ \frac{1}{\beta^{2n}} \abs{\Omega_1}\leq \abs{\Omega_2}\leq \frac{1}{\alpha^{2n}} \abs{\Omega_1}. \]
    For $\theta\in V^*$ we have
    \begin{equation}
        \label{eq-ineq}
         \frac{1}{n+1}\cdot\g_{F_2}^*(\theta, \theta)=\frac{\int_{\Omega_2}\abs{\theta(\eta)}^2d\eta}{\abs{\Omega_2}}\leq \frac{\int_{\frac{1}{\alpha}\Omega_1}\abs{\theta(\eta)}^2d\eta }{\frac{1}{\beta^{2n}}\abs{\Omega_1}}.
    \end{equation}
Making the linear change of variables $\xi = \alpha \eta$ we therefore get
\[  \frac{1}{n+1}\g_{F_2}^*(\theta, \theta)= \beta^{2n}\frac{\int_{\Omega_1} \abs{\theta( \frac{1}{\alpha}\xi)}^2 \frac{1}{\alpha^{2n}}d\xi}{\abs{\Omega_1}}= \frac{\beta^{2n}}{\alpha^{2n+2}}\frac{\int_{\Omega_1} \abs{\theta( \xi)}^2 d\xi}{\abs{\Omega_1}}=\frac{\beta^{2n}}{\alpha^{2n+2}}\cdot \frac{1}{n+1}\g_{F_1}^*(\theta, \theta). \]
Combining with a similar computation for  $\g_{F_1}^*(\theta, \theta)$ we
obtain as positive definite forms on $V^*$:
\[ \frac{\alpha^{2n}}{\beta^{2n+2}}\g_{F_1}^* \leq \g_{F_2}^* \leq  \frac{\beta^{2n}}{\alpha^{2n+2}}\g_{F_1}^*.\]
Dualizing, we obtain 
\begin{equation*}
    \frac{\alpha^{2n+2}}{\beta^{2n}} \g_{F_1} \leq \g_{F_2}\leq \frac{\beta^{2n+2}}{\alpha^{2n}}\g_{F_1},
\end{equation*}
from which  \eqref{eq-comparablegf} follows immediately.

\item Suppose for definiteness that there is a vector $v\not=0$ in $V$ such that 
$\dfrac{\alpha^{2n+2}}{\beta^{2n}} \g_{F_1}(v,v) = \g_{F_2}(v,v).$ Let $\theta=\miso(v)$,
where $\miso:V\to V^*$ is the musical map in \eqref{eq-BLdef}. Then for this $\theta\not=0$ in $V^*$,
the inequality in \eqref{eq-ineq} must be an equality, which means that ${\int_{\Omega_2}\abs{\theta(\eta)}^2d\eta}= {\int_{\frac{1}{\alpha}\Omega_1}\abs{\theta(\eta)}^2d\eta }$ 
and ${\abs{\Omega_2}}= {\frac{1}{\beta^{2n}}\abs{\Omega_1}}$. Since $\theta$ is nonzero, the set 
on which it vanishes has measure zero and consequently we must have  $\Omega_2=\frac{1}{\alpha}\Omega_1$ and also $\Omega_2=\frac{1}{\beta}\Omega_1,$ so $\alpha=\beta$. Then from  \eqref{eq-comparablef1f2}  we see that $F_2=\beta F_1.$ Finally \eqref{eq-comparablegf} gives
\[ \beta^2 \g_{F_1} \leq \g_{\beta F_2} \leq \beta^2 \g_{F_1}.\]
The case in which there is equality in the other inequality of \eqref{eq-comparablegf} is handled similarly.
\item Let $\Omega$ denote the unit ball of $F$, so that 
$\Omega$ is a convex balanced set in the $n$ complex dimensional vector space $V$.
Recall the theorem of John (\cite{john,ball}) that there is a unique Hermitian ellipsoid $J$ of minimal volume such that 
\[ \Omega \subset J \subset \sqrt{n} \Omega.\]
Denote by $\mathfrak{h}$ the Minkowski function of $J$ which is therefore a Hermitian inner product on $V$. From the above inclusions we see that 
\begin{equation}
    \label{eq-bounds}
    \frac{1}{\sqrt{n}}F \leq \sqrt{\mathfrak{h}} \leq F.
\end{equation}
Applying \eqref{eq-comparablegf} and the fact $\g_{\sqrt{\mathfrak{h}}}={\mathfrak{h}}$ we have,\[
\frac{1}{n^{\frac{n+1}{2}}} \sqrt{\g_F} \leq \sqrt{\mathfrak{h}} \leq {n^{\frac{n}{2}}} \sqrt{\g_F}.\]
Combining this with \eqref{eq-bounds}, the result follows. 
    \end{enumerate}

\end{proof}
The following proposition relates the Binet-Legendre construction with construction of products, and will be used for the proof of Theorem~\ref{thm-product}.

\begin{proposition} \label{prop-product}
    Let a finite dimensional complex vector space $V$ be represented as a direct sum $V=\bigoplus_{k=1}^p V_k$ of vector subspaces $V_k$, and for each $k$ let $F_k$ be a pseudonorm on $V_k$. Define a pseudonorm $F$ on $V$ by 
    \[F=\max_{1\leq k \leq p} \pi_k^* F_k\]
    where $\pi_k: V\to V_k$ is the projection onto the $k$-th factor. 
    Then we have 
    \[ \g_F= \sum_{j=1}^p\frac{n_j+1}{n+1} \pi_j^*\g_{F_j}, \]
    where $n=\dim V, n_k=\dim V_k$.
    
\end{proposition}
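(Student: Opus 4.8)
The plan is to reduce everything to the geometry of the unit ball. First I would observe that the defining equation $F=\max_{k}\pi_k^*F_k$ makes the unit ball of $F$ into a product: since $F(\eta)<1$ holds precisely when $F_k(\pi_k\eta)<1$ for every $k$, under the identification $V=\bigoplus_k V_k$ (writing $\eta=(\eta_1,\dots,\eta_p)$ with $\eta_k=\pi_k\eta$) we get $\Omega_F=\prod_{k=1}^p\Omega_{F_k}$. Because the complex Binet-Legendre construction is insensitive to the normalization of the Haar measure, I am free to use the Haar measure on $V$ that is the product of fixed Haar measures on the $V_k$; with this choice $\abs{\Omega_F}=\prod_{k=1}^p\abs{\Omega_{F_k}}$ and the integral in \eqref{eq-dualmetric} factors by Fubini.

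Next I would compute $\g_F^*$ on the dual side. Let $\iota_k:V_k\to V$ be the inclusions, so that the restriction maps $\iota_k^*:V^*\to V_k^*$ exhibit $V^*=\bigoplus_k V_k^*$ as the direct sum decomposition dual to $V=\bigoplus_k V_k$. Writing $\theta,\phi\in V^*$ as $\theta(\eta)=\sum_k\theta_k(\eta_k)$ and $\phi(\eta)=\sum_l\phi_l(\eta_l)$ with $\theta_k=\iota_k^*\theta$, $\phi_l=\iota_l^*\phi$, I expand $\theta(\eta)\ol{\phi(\eta)}=\sum_{k,l}\theta_k(\eta_k)\ol{\phi_l(\eta_l)}$ and integrate over the product $\Omega_F$. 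For $k\neq l$ the term factors and contains $\int_{\Omega_{F_k}}\theta_k(\eta_k)\,d\eta_k$, which vanishes because $\Omega_{F_k}$ is balanced (hence symmetric under $\eta_k\mapsto-\eta_k$) while the linear functional $\theta_k$ is odd. Only the diagonal terms survive, and after dividing by $\abs{\Omega_F}$ and recognizing each factor via \eqref{eq-dualmetric} one obtains
\[
\g_F^*(\theta,\phi)=\sum_{k=1}^p\frac{n+1}{n_k+1}\,\g_{F_k}^*\bigl(\iota_k^*\theta,\iota_k^*\phi\bigr).
\]
This exhibits $\g_F^*$ as the orthogonal direct sum $\bigoplus_k\frac{n+1}{n_k+1}\g_{F_k}^*$ relative to $V^*=\bigoplus_k V_k^*$.

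Finally I would dualize. The dual of an orthogonal direct sum of inner products is again orthogonal (a block-diagonal Gram matrix inverts block by block), and rescaling an inner product by $c>0$ rescales its dual by $1/c$. Applying these two facts through the musical isomorphism $\miso$ of \eqref{eq-BLdef} turns $\bigoplus_k\frac{n+1}{n_k+1}\g_{F_k}^*$ into $\bigoplus_k\frac{n_k+1}{n+1}\g_{F_k}$, which is exactly $\g_F=\sum_{k=1}^p\frac{n_k+1}{n+1}\pi_k^*\g_{F_k}$. The step requiring the most care is this last dualization: one must verify that the decompositions $V=\bigoplus_k V_k$ and $V^*=\bigoplus_k V_k^*$ are genuinely dual, so that the block-diagonal structure is preserved under $\miso$ and the scalar factors $\frac{n+1}{n_k+1}$ invert to $\frac{n_k+1}{n+1}$ without any cross-contamination between blocks; the vanishing of the off-diagonal integrals in the previous step is precisely what guarantees that $\g_F^*$ is block-diagonal and hence that this clean inversion is available.
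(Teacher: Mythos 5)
Your proof is correct and follows essentially the same route as the paper's: product structure of the unit ball $\Omega_F=\prod_k\Omega_{F_k}$, product Haar measure and Fubini, vanishing of the off-diagonal terms by the balanced symmetry of each $\Omega_{F_k}$, and identification of the diagonal terms as $\frac{n+1}{n_k+1}\g_{F_k}^*$, followed by dualization. The only difference is that you spell out the final dualization step (block-diagonal inversion through the musical isomorphism and the inversion of the scalar factors), which the paper asserts without detail.
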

\begin{proof} It is easily verified that $F$ is a pseudonorm, and that 
\[ \Omega= \Omega_1\times \dots \times \Omega_p,\]
where $\Omega=\{v\in V: F(v)<1\}$ is the pseudoball of $F$ and $\Omega_k=\{v\in V_k: F_k(v)<1\}$ is the pseudoball of $V_k$. For $\eta\in V,$
write $\eta_k=\pi_k\eta\in V_k$ for simplicity so that $\eta= \sum_{k=1}^p \eta_k$. For a functional $\theta\in V^*$, denote by $\theta_k$ the restriction of $\theta$ to the    subspace $V_k$. Then $\theta= \sum_{k=1}^p \theta_k$. For $\theta, \phi\in V^*$, notice that 
\begin{align*}
    \g_F^*(\theta, \phi)&=\frac{n+1}{\abs{\Omega}} \int_\Omega \theta(\eta)\ol{\phi(\eta)}d\eta\\
    &= \frac{n+1}{\abs{\Omega}} \int_\Omega \theta\left(\sum_{j=1}^p \eta_j\right)\ol{\phi\left(\sum_{k=1}^p \eta_k\right)}d\eta\\
    &= \frac{n+1}{\abs{\Omega}} \sum_{j,k=1}^p \int_\Omega \theta_j(\eta_j)\ol{\phi_k(\eta_k)}d\eta\\ 
    &=\sum_{j=1}^p \mathrm{I}_j + \sum_{j\not=k}\mathrm{II}_{j,k}, 
\end{align*}
where 
\begin{align*}
    \mathrm{I}_j&=\frac{n+1}{\abs{\Omega}}\int_\Omega \theta_j(\eta_j)\ol{\phi_j(\eta_j)}d\eta,\\
    \mathrm{II}_{j,k}&= \frac{n+1}{\abs{\Omega}}\int_\Omega \theta_j(\eta_j)\ol{\phi_k(\eta_k)}d\eta.
\end{align*}
Now we can write the Haar measure $d\eta$ on $V$ as a product measure $d\eta=d\eta_1\dots d\eta_p$, where each $d\eta_k$ is a Haar measure on $(V_k,+).$ Since by symmetry $\int_{\Omega_j}\alpha(v)dv=0$ for each $j$ and
each $\alpha\in V_j^*$, we see that 
\[ \mathrm{II}_{j,k}= \frac{n+1}{\abs{\Omega}}\cdot \left(\int_{\Omega_j}\theta_j(\eta_j)d\eta_j\right)\times \left(\int_{\Omega_k}\ol{\phi_k(\eta_k)}d\eta_k \right)\times \prod_{\ell\not=j,k} \abs{\Omega_\ell}=0.\]
Also,
\begin{align*}
    \mathrm{I}_j&= \frac{n+1}{\abs{\Omega}}\cdot \left(\int_{\Omega_j}\theta_j(\eta_j)\ol{\phi_j(\eta_j) }d\eta_j\right)\times \prod_{\ell\not=j} \abs{\Omega_\ell}\\
    &= \frac{n+1}{n_j+1}\cdot\left(\frac{n_j+1}{\abs{\Omega_j}}\cdot \int_{\Omega_j}\theta_j(\eta_j)\ol{\phi_j(\eta_j) }d\eta_j \right)\\
    &=\frac{n+1}{n_j+1}\cdot \g_{F_j}^*(\theta_j,\phi_j)\\
    \end{align*}
Therefore
\[  \g_F^*(\theta, \phi)= \sum_{j=1}^p \frac{n+1}{n_j+1}\cdot \g_{F_j}^*(\theta_j,\phi_j),\]
so that  we have
\[ \g_F= \sum_{j=1}^p\frac{n_j+1}{n+1} \pi_j^*\g_{F_j}. \] 
\end{proof}   

\section{The metric $\kappa_M$} 
\subsection{The Kobayashi-Royden pseudometric} For an extensive introduction to the invariant metrics of complex analysis, see \cite{mahabharata}. Here we will
recall the definitions and properties that are relevant to the new metric. 

Let $ M $ be a complex manifold of dimension $ n $. Let $ p \in M $ and $ v \in \textbf{T}_p M $ be a holomorphic tangent vector at $ p $. The infinitesimal Kobayashi(-Royden) pseudometric is defined as
\[
k_M(v) = \inf\left\{ \left.\frac{1}{\lambda_f}\right| \lambda_f>0, f:\Delta \to M \text{ holomorphic}, f(0)=p, f'(0)=\lambda_f v \right\},
\]
where $\Delta$ denotes the unit disc. 
Recall that
\begin{itemize}
 \item $ k_M(v) \geq 0 $ for all $ v \in \textbf{T}_p M$,
 \item $ k_M (c v) = |c| k_M(v) $ for all $ c \in \mathbb{C} $,
 \item $ k_M $ is upper semi-continuous on the holomorphic tangent bundle $ \textbf{T} M $.
\end{itemize}
It follows that the pseudoball 
\begin{equation} \label{Kob-indica}
\Omega_{M,p} = \{ v \in \textbf{T}_p M: k_M(v) < 1 \}
\end{equation} 
determined by $ k_M $ (referred to as the \textit{indicatrix} of $ k_M $ at $ p $) is open and balanced. However, in general, the triangle inequality does not hold for $ k_M (\cdot) $ on $ \textbf{T}_p M $.

Further recall that $ M $ is said to be Kobayashi hyperbolic if $ k_M $ is locally bounded below by a positive constant (\cite{Roy}), i.e., for each $ p \in M $, there is a neighbourhood $ U $ of $ p $ and a positive constant $ C $ such that
\begin{equation} \label{Kob-hypo}
k_M (v) \geq C \|v\| 
\end{equation}
for all $ p \in U $  and $ \| \cdot\| $ a Hermitian norm on $ \textbf{T}_p M $. In particular, if $ M $ is Kobayashi hyperbolic, then 
\begin{itemize}
   \item  $ k_M(v) =0 $ if and only if $ v = 0 $.
   \item  The pseudoball determined by $ k_M $ is bounded. Indeed, let $  C > 0 $ be as asserted in \eqref{Kob-hypo}. It follows that the pseudoball $ \{ k_M(v) < 1 \} $ is contained in the Hermitian ball $ \{ \|v\| < 1/C \} $, and consequently it is bounded.
\end{itemize}
The integrated Kobayashi pseudodistance $ d_M $ on $ M $ is defined as follows: for $ p_1, p_2 \in M $,
\[
d_M(p_1,p_2) = \inf_{\gamma} \int k_M \left( \dot{\gamma}(t) \right)dt
\]
where the infimum is taken over all piecewise differentiable curves $ \gamma $ in $ M $ joining $ p_1 $ to $ p_2 $. Then $ d_M $ is called the integrated form of $ k_M $. Finally, $ M $ is said to be Kobayashi complete hyperbolic if $ M $ is hyperbolic and $ (M, d_M) $ is Cauchy complete. 

\subsection{The Kobayashi-Busemann metric and the definition of $\kappa_M$}
The Kobayashi-Royden pseudometric displays some undesirable features, 
such as the fact that it does not satisfy the triangle
inequality on the tangent spaces, though the integrated pseudodistance $d_M$ does satisfy the triangle inequality,
thanks to the infimum over curves in its definition. This is because the definition of $k_M$ corresponds to an infinitesimal form of the so-called \emph{Lempert function},
\[\ell_M(z,w)=\inf\left\{ \left.d_\Delta(0, f(\zeta))\right|\zeta\in \Delta,  f:\Delta \to M \text{ holomorphic}, f(0)=z, f(\zeta)=w \right\},\]
wgere $d_\Delta$ denotes the Poincaré hyperbolic distance on the unit disc. The Lempert function does not satisfy the triangle inequality, and 
to obtain a genuine triangle-inequality-satisfying metric from it, one can either use a well-known construction involving a chain of discs, or use
the definition in terms of the infinitesimal metric given in the previous section. Similarly,
the infinitesimal form of the Lempert function, the Kobayashi-Royden pseudometric, does
not satisfy the triangle inequality, but the triangle inequality can be salvaged by a further 
modification using classical ideas of Busemann and Meyer (see \cite{busemann}), thus obtaining 
the Kobayashi-Busemann infinitesimal
pseudometric (see \cite{kobayashi3}).

Let us now recall the definition of the infinitesimal Kobayashi-Busemann pseudometric $ \hat{k}_M $ on a complex manifold $ M $ at a point $ p \in M $:
\[
\hat{k}_M(v) = \inf\{ t > 0, t^{-1} v \in \hat{\Omega}_{M, p}\},
\]
for $ v \in \textbf{T}_p M $, where $ \hat{\Omega}_{M, p} $ denotes the convex hull of the pseudoball $ {\Omega}_{M, p}$ with respect to the Kobayashi pseudometric $ k_M $ at $ p $. It follows, by construction, that $\hat{\Omega}_{M, p} $ is the unit ball determined by $ \hat{k}_M $ at $ p $. Note that
\begin{itemize}
 \item $ \hat{k}_M(v) \geq 0 $ for all $ v \in \textbf{T}_p M$,
 \item $ \hat{k}_M (c v) = |c| k_M(v) $ for all $ c \in \mathbb{C} $,
 \item $ \hat{k}_M (v) \leq k_M (v) $ for all $ v \in \textbf{T}_p M$,
 \item $ \hat{k}_M (v + v') \leq \hat{k}_M(v) + \hat{k}_M(v')$ for all $ v, v' \in \textbf{T}_p M$.
 \end{itemize}
In particular, $ \hat{k}_M $ is continuous on $ \textbf{T}_p M $. Consequently, the associated unit ball $ \hat{\Omega}_{M, p} $ is an open, bounded, balanced convex subset of $ \textbf{T}_p M $. This then leads to the main definition of this paper:
\begin{definition} 
Let $ M $ be a Kobayashi hyperbolic complex manifold. The metric $\kappa_M$ is defined by application of the complex Binet-Legendre construction pointwise to the Finsler metric $ \hat{k}_M$, i.e., for 
     $p\in M$ and $v,w\in \textbf{T}_p M$, set
    \[\kappa_M(v,w)=\g_{ \hat{k}_M}(v,w). \]
\end{definition}

\subsection{Comparison with the Wu metric}

We begin by recalling the definition of the Wu metric and its relation with the Kobayashi-Busemann metric. Let $ M $ be a Kobayashi hyperbolic complex manifold of complex dmension $m$. The Wu metric $ w_M$ at a point $ p \in M $ is the Hermitian inner product on $ \textbf{T}_pM $ whose unit ball is the complex ellipsoid with minimum volume among all ellipsoids containing the Kobayashi indicatrix $ \Omega_{M,p} $. Like other Hermitian metrics, $w_M$ defines a Finsler metric by 
taking the norm, which we denote by $\sqrt{w_M}$, i.e. $\sqrt{w_M}(v)=\sqrt{w_M(v,v)}$ for each $v\in \textbf{T}_pM$.

Now, the complex ellipsoid with minimum volume that contains $ \Omega_{M,p} $ is the same as the ellipsoid with minimum volume containing its convex hull $ \hat{\Omega}_{M,p} $. This implies that $ \hat{k}_M \leq \sqrt{w_M}$. Further, $ \hat{\Omega}_{M,p} $  is a convex balanced subset of $ \textbf{T}_p M $ so the theorem of John (\cite{john,ball}) applies, and hence the complex ellipsoid of minimum volume which contains $ \hat{\Omega}_{M,p} $ is contained in $ \sqrt{m} \; \hat{\Omega}_{M,p} $. In other words, the unit ball of the Wu metric $ w_M $ at $ p $ is contained in $ \sqrt{m} \;\hat{\Omega}_{M,p} $ and therefore $\sqrt{w_M} \leq \sqrt{m} \; \hat{k}_M $. It follows that 
\begin{equation} \label{Wu}
\hat{k}_M \leq \sqrt{w_M} \leq \sqrt{m} \; \hat{k}_M.
\end{equation}
On the other hand, it is immediate from Proposition \ref{Basic} (5) that 
\[
\frac{1}{m^{\frac{m+1}{2}}} \hat{k}_M \leq \sqrt{{\kappa}_M} \leq {m^{\frac{m+1}{2}}}{\hat{k}_M}. 
\]
Combining the above two observations, we arrive at the following:
\[
\frac{1}{m^{\frac{m+2}{2}}} \sqrt{w_M} \leq \sqrt{{\kappa}_M} \leq {m^{\frac{m+1}{2}}}\sqrt{{w_M}}, 
\]
which shows in particular that the integrated distance induced by the Wu metric is comparable to that induced by the metric $\kappa_M$. 

\section{Proofs of the Theorems}
\subsection{Proof of Theorem \ref{thm-main}} 
If $ f: M \rightarrow N $ is a biholomorphic mapping of Kobayashi hyperbolic manifolds $ M, N $, then 
\[
f^* \kappa_N = \kappa_M.
\]
Indeed, since $ f $ is a biholomorphism, for each $ p \in M $, the mapping $ f'(p): \textbf{T}_p M \rightarrow \textbf{T}_{f(p)} N $ is a $ \mathbb{C} $-linear isomorphism. In this setting, Proposition \ref{Basic}\;(2) guarantees that $ \mathfrak{g}_{f^* \hat{k}_N} = f^*\mathfrak{g}_{ \hat{k}_N} $. Moreover, $ f $ preserves the Kobayashi-Busemann metric, i.e. $ f^* \hat{k}_N = \hat{k}_M $, which, in turn, implies that the associated complex Binet-Legendre metrices coincide:
\[
\mathfrak{g}_{f^* \hat{k}_N} = \mathfrak{g}_{ \hat{k}_M}.
\]
Combining the above two observations immediately yields that $ f^*\kappa_N = \kappa_M $, i.e., the associated complex Binet-Legendre metric is invariant under biholomorphisms.

Next, part (1) follows from the explicit expression for the Kobayashi metric on $ \mathbb{B}^n $ (which coincides with the Kobayashi-Busemann pseudometric $ \hat{k}_{\mathbb{B}_n} $) and Proposition \ref{Basic}\;(1). 

\begin{enumerate} [wide] \setcounter{enumi}{1}
\item 


If the Kobayashi-Busemann pseudometric $ \hat{k}_M $ is $ C^k$-smooth, then $ C^k$-smoothness of $ \kappa_M $ follows exactly as for the real version in \cite[Theorem~2.4]{binet}.

\item First, note that if $ M $ is complete Kobayashi hyperbolic, then the Kobayashi metric $ k_M $ is continuous on the holomorphic tangent bundle $ \textbf{T} M $. Consequently, Kobayashi-Busemann metric $ \hat{k}_M $ is also continuous on $ \textbf{T} M $ (cf. Proposition 3.5.38, \cite{kobayashifat}). It follows as in part (2) that the associated Binet-Legendre metric $ \kappa_M $ is also continuous. 

To establish the completeness of $ \kappa_M $, notice that 
Proposition \ref{Basic} (5) ensures that 
\[
\frac{1}{m^{\frac{m+1}{2}}} \hat{k}_M \leq \sqrt{\g_{\hat{k}_M}} \leq {m^{\frac{m+1}{2}}}{\hat{k}_M},  
\]
where $ m = \mbox{dim}_{\mathbb{C}} M $. It follows that the integrated form of $ \g_{\hat{k}_M}$ is equivalent to the integrated $ \hat{k}_M $ distance. Since the integrated form of $ \hat{k}_M $ is equal to the Kobayashi pseudodistance $ d_M $ (see \cite{Kobayashi2}), the desired result follows.

\item 
By the distance decreasing property of the Kobayashi-Busemann metric under holomorphic maps, it follows that $ f^* \hat{k}_N \leq \hat{k}_M $. 
It follows from Proposition \ref{Basic} (5) that 
\[
\frac{1}{m^{\frac{m+1}{2}}} \hat{k}_M \leq \sqrt{\g_{\hat{k}_M}} \leq {m^{\frac{m+1}{2}}}{\hat{k}_M},  
\]
and 
\[
\frac{1}{n^{\frac{n+1}{2}}} \hat{k}_N \leq \sqrt{\g_{\hat{k}_N}} \leq {n^{\frac{n+1}{2}}}{\hat{k}_N},  
\]
where $ m = \mbox{dim}_{\mathbb{C}} M $ and $ n = \mbox{dim}_{\mathbb{C}} N $. For $ p \in M $ and $ v \in \textbf{T}_p M $,
observe that 
\begin{align*}
    f^* \g_{\hat{k}_N}  (v,v) 
    =  \g_{\hat{k}_N}  \left( f'(p)v, f'(p) v \right) 
    & \leq  \left( n^{\frac{n+1}{2}} {\hat{k}_N} \left( f'(p)v \right) \right)^2 \\
    & \leq \left( n^{\frac{n+1}{2}} {\hat{k}_M} ( v) \right)^2 \\
    & \leq  n^{n+1} m^{m+1} \g_{\hat{k}_M}( v).      
\end{align*}
This is exactly the assertion $ f^* \kappa_N \leq  n^{n+1} m^{m+1} \kappa_M $.

Lastly, if the Kobayashi metric on $ M$ is already Hermitian, then so is the Kobayashi-Busemann metric on $ M$. This follows from the fact that the Kobayashi-Busemann metric $ \hat{k}_M $ is the double dual of the Kobayashi metric $ k_M $. Applying Proposition \ref{Basic} (1), we get that $ \g_{\hat{k}_M} = ( \hat{k}_M )^2 $. As a consequence,
\[
 \g_{\hat{k}_N}  \left( f'(p)v, f'(p) v \right) \leq  \left( n^{\frac{n+1}{2}} {\hat{k}_N} \left( f'(p)v \right) \right)^2 \leq \left( n^{\frac{n+1}{2}} {\hat{k}_M} ( v) \right)^2  = n^{n+1} \mathfrak{g}_{\hat{k}_M}(v),
\]
or equivalently that
\[
f^* \g_{\hat{k}_N}  (v,v) \leq n^{n+1} \mathfrak{g}_{\hat{k}_M}(v).
\]
 \end{enumerate}

\subsection{Proof of Theorem \ref{thm-product}}
    
Theorem \ref{thm-product} is an immediate consequence of Proposition \ref{prop-product} since the Kobayashi-Busemann metric at any point $ p = (p_1, \dots, p_N) $ of the product manifold $M=M_1\times  \dots \times M_N$ satisfies the following formula:
\[
\hat{k}_{M} (v_1, \dots, v_N) = 
\max \{ \hat{k}_{M_1} (v_1), \dots, \hat{k}_{M_N} (v_N)\},
\]
where $ v_i \in \mathbf{T}_{p_i}M_i $ for each $ i = 1, \dots, N $ (see Proposition 3.5.25 of \cite{kobayashifat}).

\subsection{Proof of Theorem \ref{thm-covering}}

We first recall the property of the Kobayashi-Busemann metric under covering projections (cf. Proposition 3.5.26, \cite{kobayashifat}): Let $ N $ be a complex manifold and $ M $ be a covering manifold of $ N $ with covering projection $ \pi: M \rightarrow N $. Then 
\[
\hat{k}_M = \pi^* \hat{k}_N.
\]
The goal is to show that a similar relation holds for the associated Binet-Legendre metrics under the additional hypothesis that $ M, N  $ are Kobayashi hyperbolic. To this end, first note that, since $ \pi $ is a local biholomorphism, for each $ p \in M $, the mapping $ \pi'(p): \textbf{T}_p M \rightarrow \textbf{T}_{ \pi(p)} N $ is a $ \mathbb{C} $-linear isomorphism. Hence Proposition \ref{Basic}\;(2) applies so that 
\[
\mathfrak{g}_{\pi^* \hat{k}_N} = \pi^*\mathfrak{g}_{ \hat{k}_N}, 
\]
which, in turn, implies that
\[
\mathfrak{g}_{\hat{k}_M} = \pi^*\mathfrak{g}_{ \hat{k}_N}, 
\]
i.e., $ \kappa_M = \pi^* {\kappa_N} $.
\section{Some questions}
At this point we do not know if the metric $\kappa_M$ provides
better insight to Kobayashi's problem than the Wu metric. At present, 
there are very few concrete situations where one can actually compute the Wu metric. The most important example is that of the Thullen domains (\cite{cheungkim1,cheunkim2,cheungkimconstant, balaprachiwu1,balaprachiwu2}). It would be extremely interesting to see what happens to the metric $\kappa_M$ on these domains, and how it compares to the Wu metric. 

\subsection*{Acknowledgements:} We gratefully acknowledge the comments of Vladimir Matveev. The first-named author thanks
the Indian Institute of Technology, Bombay for its hospitality during 
his stay there as a visiting faculty in 2024.
\bibliographystyle{alpha}
\bibliography{kobayashi}
\end{document}